\documentclass{amsart}
\usepackage{amsmath, amssymb, amsfonts, url, color, hyperref}
\newtheorem{theorem}{Theorem}[section]

\newtheorem{proposition}{Proposition}[section]

\newtheorem*{question*}{Question}
\newtheorem{definition}{Definition}[section]
\newtheorem{example}{Example}[section]

\newtheorem*{acknowledgements*}{Acknowledgements}
\numberwithin{equation}{section}

\newcommand{\CC}{\mathbb{C}}
\newcommand{\RR}{\mathbb{R}}

\begin{document}

\title{Localization of Forelli's theorem}

\author{Ye-Won Luke Cho}
\begin{abstract}
	The main purpose of this article is to present a localization of Forelli's 
	theorem for functions holomorphic along a standard suspension of linear discs. This generalizes one of the main results of \cite{CK21} and the original Forelli's theorem.
\end{abstract}
\subjclass[2010]{32A10, 32A05, 32U20}

\keywords{Complex-analyticity, Forelli's theorem, Formal power series, Pluripotential theory.}

\maketitle

\section{Introduction}
\subsection{Notations and terminology}
Denote by $B^n(a;r) := \{z \in \CC^n \colon \|z-a\|<r  \}$ and by
$S^m := \{v \in \RR^{m+1} \colon \|v\|=1\}$.  With such notation,
the boundary of $B^n:=B^n(0;1)$ is $S^{2n-1}$.  For a nonempty subset $F$ of $S^{2n-1}$, consider $S_0(F):=\{zv: z\in B^1, v\in F\}$. By a $\textit{\textup{(}standard\textup{)} suspension}$, we mean a pair $(S_0(F),\psi)$, where $\psi(u, \lambda) := \lambda u$ for each $\lambda\in B^1,~u\in F$. For simplicity, we will denote a suspension by its underlying set. We say that $S_0(F)$ is a $\textit{formal Forelli suspension}$ if any function $f:B^n\to \mathbb{C}$ satisfying the following two conditions
\begin{enumerate}
	\setlength\itemsep{0.3em}
	\item $f\in C^{\infty}(0)$, i.e., for each positive integer $k$ there
	exists an open neighborhood $V_k$ of the origin $0$ such that $f \in C^k 
	(V_k)$, and
	\item $f$ is holomorphic along $S_0(F),$ i.e., $z\in B^1\to f(zv)$ is holomorphic for each $v\in F$
\end{enumerate}
has a formal Taylor series $S_f=\sum C_{I}^{J}z^{I}\bar{z}^{J}$  of holomorphic type, that is, $C_{I}^{J}=0$ whenever $J\neq 0$. See (\ref{formal Taylor series}) for the definition of $C_{I}^{J}$.  We also say that $S_0(F)$ is a $\textit{normal suspension}$ if the following is true: any formal power series $S\in \mathbb{C}[[z_1,\dots,z_n]]$ for which $S_v(t):=S(tv)\in \mathbb{C}[[t]]$ is holomorphic for each $v\in F$ converges uniformly on a neighborhood of the origin.  A formal Forelli suspension that is also normal is called a $\textit{Forelli suspension}$. Then any function $f:B^n\to \mathbb{C}$ that is smooth at the origin and holomorphic along a Forelli suspension must be holomorphic on $B^n(0;r)$ for some $r>0$.

  Fix a suspension $S_0(F)$ and denote by $\bar{F}$ the closure of $F$ in $S^{2n-1}$. $S_0(F)$ is said to have an $\textit{algebraically nonsparse leaf}~ L_v:=\{zv:z\in B^1\}$ generated by $v\in \bar{F}$ if the following is true: for each open neighborhood $U\subset S^{2n-1}$ of $v$ and a real polynomial $P\in \mathbb{R}[x_1,y_1,\dots,x_n,y_n]$ satisfying
 \[
  S_0(U\cap \bar{F})\subset Z(P):=\{z\in \mathbb{C}^n:P(z)=0\},
 \] we have $P\equiv 0$ on $\mathbb{C}^n$. In this case, the suspension is said to be $\textit{nonsparse}$. A suspension is $\textit{sparse}$ if it has no nonsparse leaf. For each $i\in \{1,\dots,n\}$, define 
 \begin{align*}
	\begin{gathered}
		F'_i:= \{(\frac{z_1}{z_i}, \ldots,\frac{z_{i-1}}{z_i},\frac{z_{i+1}}{z_i},\ldots, \frac{z_n}{z_i})\in \mathbb{C}^{n-1}
		\colon  (z_1,\ldots,z_n)\in F, z_i\neq 0 \},~\text{and}\\
		F':=\bigcup_{i=1}^{n}F'_i.
	\end{gathered}
\end{align*}
 $S_0(F)$ is said to have a $\textit{regular leaf}~L_v$ generated by $v=(v_1,\ldots,v_n)\in \bar{F}$ if there exists an integer $i$ $\in \{1,\dots,n\}$ such that $v_i\neq 0$ and the set $F'_i$ is locally $L$-regular at $v'_i:=(\frac{v_1}{v_i}, \ldots,\frac{v_{i-1}}{v_i},\frac{v_{i+1}}{v_i},\ldots,$ $ \frac{v_n}{v_i})$. For the definition of $L$-regularity, see Section \ref{Sect 3}. A suspension is called a $\textit{regular suspension}$ if it has a regular leaf.
 
 \subsection{Main theorem} In this paper, we prove the following 
 \begin{theorem}\label{main theorem}
 	If a suspension $S_0(F)$ has a nonsparse leaf and a regular leaf, then it is a Forelli suspension; that is, any function $f:B^n\to \mathbb{C}$ satisfying the following two conditions
 	\begin{enumerate}
 	\item $f\in C^{\infty}(0)$, and
 	\item $z\in B^1\to f(zv)$ is holomorphic for each $v\in F$
 	\end{enumerate}
 is holomorphic on $B^n(0;r)$ for some $r>0$. Furthermore, there exists a neighborhood $U\subset S^{2n-1}$ of a generator $v_0\in \bar{F}$ of the regular leaf such that $f|_{B^n(0;r)}$ extends to a holomorphic function on $B^n(0;r)\cup S_0(U)$.
 \end{theorem}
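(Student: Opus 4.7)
The strategy is to verify that $S_0(F)$ is both a formal Forelli suspension (using the nonsparse leaf) and a normal suspension (using the regular leaf); by the introduction's definition of a Forelli suspension, this already yields the first assertion, and a local refinement of the convergence argument will give the extension to $S_0(U)$.

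For formal Forelli I would expand the formal Taylor series $S_f = \sum C_I^J z^I \bar z^J$ of $f$ at the origin and restrict to a leaf $L_v$: $f(tv) \sim \sum C_I^J v^I \bar v^J t^{|I|} \bar t^{|J|}$. Holomorphicity of $t \mapsto f(tv)$ forces the coefficient of each $t^k \bar t^{\ell}$ with $\ell \geq 1$ to vanish, so
\[
P_{k,\ell}(v) := \sum_{|I|=k,\, |J|=\ell} C_I^J v^I \bar v^J = 0 \quad \text{on } F.
\]
Viewed as a real polynomial on $\CC^n \cong \RR^{2n}$, $P_{k,\ell}$ is homogeneous of degree $k+\ell$, so its vanishing on $F$ extends by continuity to $\bar F$ and by real homogeneity to $S_0(\bar F)$. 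The nonsparse hypothesis at the generator of the nonsparse leaf then forces $P_{k,\ell} \equiv 0$, and linear independence of the monomials $v^I \bar v^J$ gives $C_I^J = 0$ for every $J \neq 0$. Hence $S_f = \sum_I C_I z^I$ is of holomorphic type.

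For the normal property, write $S_f = \sum_k Q_k$ as its sum of homogeneous components. By the previous step and $C^\infty(0)$, for each $v \in F$ the series $\sum_k Q_k(v) t^k$ is the honest Taylor series of the holomorphic function $t \mapsto f(tv)$ on $|t| < 1$, so $\limsup_k |Q_k(v)|^{1/k} \leq 1$. I would then pass to the inhomogeneous coordinates $w = v'_i$ singled out by the regular leaf (say $i = 1$), write $Q_k(v) = v_1^k \tilde Q_k(w)$ with $\deg \tilde Q_k \leq k$, and reformulate the above as
\[
\limsup_k |\tilde Q_k(w)|^{1/k} \leq \sqrt{1 + \|w\|^2} \quad \text{on } F'_1.
\]
Using the local $L$-regularity of $F'_1$ at the corresponding image $w_0$ of $v_0$, together with a weighted Bernstein--Walsh/Sadullaev estimate for the Siciak extremal function with weight $g(w) := \tfrac12 \log(1 + \|w\|^2)$, this bound should (i) propagate to every compact subset of $\CC^{n-1}$, yielding uniform control of $\|Q_k\|_{S^{2n-1}}^{1/k}$ by homogeneity and hence absolute convergence of $\sum_k Q_k$ on some $B^n(0;r)$, and (ii) give a sharpened local estimate $|\tilde Q_k(w)|^{1/k} \leq e^\varepsilon \sqrt{1+\|w\|^2}$ on a neighborhood of $w_0$, which translates by homogeneity into absolute and uniform convergence of $\sum_k Q_k$ on $S_0(U)$ for a small neighborhood $U \subset S^{2n-1}$ of $v_0$. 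Along leaves, the limit function agrees with $f$ by one-variable uniqueness, giving the required holomorphic extension to $B^n(0;r) \cup S_0(U)$.

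The main obstacle is the pluripotential-theoretic step: extracting quantitative, uniform polynomial bounds on the $\tilde Q_k$ from only a local $L$-regularity of $F'_i$ at one point, with the Fubini--Study-type weight $g$ forced on us by projectivizing the sphere. The cleanest framework is likely weighted pluripotential theory on $\CC^{n-1}$ (or equivalently pluripotential theory on $\CC P^{n-1}$ with its Fubini--Study class), together with the fact that local $L$-regularity implies non-pluripolarity, so that the relevant extremal function is locally bounded. Once these bounds are in place, the remaining pieces---comparison of Taylor series, Abel-type summation, and matching of extensions on overlapping leaves---should be routine.
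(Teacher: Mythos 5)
Your route to the first assertion is essentially the paper's, packaged slightly differently. The formal step is the same computation: the coefficient of $t^k\bar t^{\,\ell}$ in the expansion of $f(tv)$ is the bihomogeneous polynomial $P_{k,\ell}$, its vanishing on $F$ propagates to $S_0(\bar F)$ by continuity and bihomogeneity, and the nonsparse leaf (applied to $\operatorname{Re}P_{k,\ell}$ and $\operatorname{Im}P_{k,\ell}$ separately, since nonsparseness is stated for real polynomials) forces $P_{k,\ell}\equiv 0$ for $\ell\geq 1$; the paper phrases this via $\bar E S\equiv 0$ but the content is identical. For convergence on a ball your weighted extremal function is unnecessary: once you have $\limsup_k|\tilde Q_k(w)|^{1/k}\leq\sqrt{1+\|w\|^2}<\infty$ pointwise on $F'_i$, the paper's Lelong-type estimate (Proposition \ref{estimate}, which needs only $c_{n-1}(F'_i)\neq 0$, i.e.\ non-pluripolarity, which local $L$-regularity at one point supplies) already gives $|\tilde Q_k|\leq M^k$ on compacts, hence $\|Q_k\|_{S^{2n-1}}^{1/k}\leq M'$ and convergence on $B^n(0;1/2M')$. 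So steps (i) and the formal part are sound, just heavier than needed.

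The genuine gap is in your step (ii), the ``furthermore'' clause. Any Bernstein--Walsh-type bound relative to $F'_i$ can only yield $\limsup_k|\tilde Q_k(w)|^{1/k}\leq e^{V^*(w)}\sqrt{1+\|w\|^2}$, where $V^*$ is the relevant (weighted or unweighted) extremal function of $F'_i$; this gives radius of convergence $\geq e^{-V^*(w)}$ along the leaf $L_v$ with $v\leftrightarrow w$. Local $L$-regularity gives $V^*(w_0)=0$, and upper semicontinuity plus $V^*\geq 0$ gives $V^*(w)\to 0$ as $w\to w_0$, but for thin $F$ (e.g.\ $F'_i$ a real segment, or the nowhere dense suspensions of Section \ref{Sect 4}) one has $V^*>0$ off $F'_i$, so there is no neighborhood $U$ of $v_0$ on which your estimate reaches radius $1$. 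You therefore only obtain convergence on a shrinking suspension $\{tv:|t|<e^{-V^*(v')}\}$, not on the full $S_0(U)=\{tv:|t|<1,\ v\in U\}$ that the theorem asserts; no sharpening of coefficient estimates at the origin can repair this, because the needed input is a propagation-of-analyticity statement rather than a growth bound. The paper closes this step by invoking Shiffman's generalization of Hartogs' lemma on separate analyticity \cite{Shiffman89}: $f$ is now holomorphic on the open set $B^n(0;r)$ and separately holomorphic on each disc $L_v$ for $v$ ranging in a set that is locally $L$-regular at $v_0$, and the cross theorem extends $f$ holomorphically to $B^n(0;r)\cup S_0(U)$ for some neighborhood $U$ of $v_0$. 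You need to import a result of this type; it is the one ingredient your outline is missing.
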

As each point of an open subset of $S^{2n-1}$ generates a leaf that is both nonsparse and regular, Theorem \ref{main theorem} implies the following 
\begin{theorem}[Cho-Kim \cite{CK21}] \label{CK21}
Let $U$ be a nonempty open subset of $S^{2n-1}$. If $f:B^n\to \mathbb{C}$ satisfies the following two conditions
	\begin{enumerate}
	\item $f\in C^{\infty}(0)$, and
	\item $z\in B^1\to f(zv)$ is holomorphic for each $v\in U$,
\end{enumerate}
then $f$ is holomorphic on $B^n(0;r)\cup S_0(U)$ for some $r>0$.
\end{theorem}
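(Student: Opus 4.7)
\emph{Strategy.} The definitions of \emph{formal Forelli suspension} and \emph{normal suspension} decouple the conclusion of the theorem into two independent assertions: (i) the formal Taylor series $S_f$ of $f$ is of holomorphic type, and (ii) every formal power series whose restriction to every line through $0$ generated by an element of $F$ converges must itself converge in a neighborhood of $0$. The plan is to prove (i) from the nonsparse leaf hypothesis alone and (ii) from the regular leaf hypothesis alone; together they make $S_0(F)$ a Forelli suspension, from which the first conclusion follows, and the neighborhood $U \subset S^{2n-1}$ in the extension statement will be produced in the course of (ii).

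\emph{Step (i): nonsparse leaf $\Rightarrow$ formal Forelli.} For each $v \in F$, the function $t \mapsto f(tv)$ is holomorphic on $B^1$ and $C^{\infty}$ at $0$, so its formal Taylor series at $0$ agrees with its (convergent) holomorphic expansion and contains no $\bar{t}$-monomials. Collecting, at each bidegree $(a,b)$ with $b > 0$, the coefficient of $t^a \bar{t}^b$ yields vanishing of the bi-homogeneous polynomial
\[
P_{a,b}(v) := \sum_{|I|=a,\,|J|=b} C_I^J\, v^I \bar{v}^J, \qquad b > 0,
\]
on $F$. Bi-homogeneity immediately extends this vanishing to $S_0(F)$, continuity extends it to $S_0(\bar{F})$, and hence to $S_0(U \cap \bar{F})$ for every open $U \subset S^{2n-1}$ around the generator $v_0$ of the nonsparse leaf. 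Splitting $P_{a,b}$ into its real and imaginary parts (each a real polynomial in the real coordinates), the defining property of the nonsparse leaf forces $P_{a,b} \equiv 0$ on $\mathbb{C}^n$; linear independence of the monomials $v^I \bar{v}^J$ of fixed bidegree then gives $C_I^J = 0$ whenever $J \neq 0$.

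\emph{Step (ii): regular leaf $\Rightarrow$ normal.} Let $v_0 \in \bar{F}$ generate the regular leaf; without loss of generality $v_{0,n} \neq 0$ and $F'_n$ is locally $L$-regular at $v'_{0,n}$. Decompose an arbitrary $S \in \mathbb{C}[[z_1,\ldots,z_n]]$ into its homogeneous components $S = \sum_k S_k$ and dehomogenize by $Q_k(w) := S_k(w_1,\ldots,w_{n-1},1)$, a polynomial of degree at most $k$ in $n-1$ variables. For $v \in F$ with $v_n \neq 0$ and $w = v'_n \in F'_n$, convergence of $t \mapsto S(tv)$ is equivalent to convergence of $\sum_k (tv_n)^k Q_k(w)$, giving the pointwise bound $\limsup_k |Q_k(w)|^{1/k} < \infty$ on $F'_n$. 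A Baire-type exhaustion $F'_n = \bigcup_{M \in \mathbb{N}} \{w \in F'_n : |Q_k(w)| \leq M^k \text{ for every } k\}$ locates a subset of $F'_n$ on which $|Q_k|^{1/k}$ is uniformly controlled. The local $L$-regularity of $F'_n$ at $v'_{0,n}$, combined with the Bernstein-Walsh inequality (equivalently, with continuity of the Siciak-Zaharjuta extremal function at $v'_{0,n}$), propagates this polynomial bound — with arbitrarily small exponential loss — to a full neighborhood $W \subset \mathbb{C}^{n-1}$ of $v'_{0,n}$. Rehomogenizing, $\sum_k S_k$ converges uniformly on a conic neighborhood of $L_{v_0}$ of the form $B^n(0;r) \cup S_0(U)$ for suitable $r > 0$ and neighborhood $U \subset S^{2n-1}$ of $v_0$.

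\emph{Completion and main obstacle.} Applying (i) and (ii) to $S_f$, we obtain a holomorphic $h$ on $B^n(0;r) \cup S_0(U)$ whose Taylor series at $0$ is $S_f$. Along each leaf $L_v$ with $v \in F$, both $h|_{L_v}$ and $f|_{L_v}$ are holomorphic with the same Taylor expansion at $0$, so the identity principle forces $f = h$ on $L_v$; together with smoothness of $f$ at $0$ this yields $f = h$ on $B^n(0;r)$, and the required extension to $B^n(0;r) \cup S_0(U)$ is then $h$ itself. The main technical obstacle lies in step (ii): passing from pointwise polynomial growth bounds on the possibly highly irregular set $F'_n$ to uniform bounds on a full $\mathbb{C}^{n-1}$-neighborhood of $v'_{0,n}$. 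This is precisely where $L$-regularity — rather than any weaker topological regularity of $F'_n$ — is essential, as one needs the full quantitative strength of Bernstein-Walsh to control the polynomials $Q_k$ with subexponential loss as $k \to \infty$.
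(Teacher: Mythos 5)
Your two-step architecture (nonsparse leaf $\Rightarrow$ the formal Taylor series $S_f$ is of holomorphic type; regular, i.e.\ non-pluripolar, direction set $\Rightarrow$ $S_f$ converges near $0$) is exactly the paper's, and both steps are sound up to and including the conclusion that $S_f$ converges uniformly on some $B^n(0;r)$, so that $f=S_f$ is holomorphic there. Your Step (i) is the paper's Theorem 2.1 argument in a different packaging (bidegree-by-bidegree vanishing of $P_{a,b}$ instead of the operator $\bar E=\sum \bar z_k\partial/\partial\bar z_k$), and your Step (ii) is the Lelong-type Proposition 3.1 of the paper (exhaustion by $\{|Q_k|\le M^k\ \forall k\}$, non-pluripolarity of one piece, Bernstein--Walsh, Cauchy estimates on the coefficients). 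You should also record the one-line verification that an open $U$ does furnish both hypotheses: nonsparseness because a real polynomial vanishing on the open cone $S_0(U)$ vanishes identically by real-analyticity, and regularity because $U_i'$ contains an open subset of $\mathbb{C}^{n-1}$.

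The genuine gap is in the extension to $S_0(U)$. You assert that rehomogenizing makes $\sum_k S_k$ converge uniformly on ``$B^n(0;r)\cup S_0(U)$''. It does not: a bound $|S_k(v)|\le C^k$ for $v$ in a neighborhood of $v_0$ in $S^{2n-1}$ only gives convergence of $\sum_k t^k S_k(v)$ for $|t|<1/C$, i.e.\ on a truncated cone, not on the full cone $S_0(U)=\{tv: |t|<1, v\in U\}$; and for an arbitrary formal series satisfying your hypotheses in Step (ii) the claimed convergence on $S_0(U)$ is simply false (consider $\sum_k (2z_1)^k$). Consequently the function $h$ in your completion step is only defined on a small ball plus a short cone, and your identity-principle argument, while correct where both $f$ and $h$ are defined, cannot produce joint analyticity of $f$ at points $t_0v$ with $|t_0|$ close to $1$. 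What is needed there --- and what the paper uses --- is a separate-analyticity theorem: once $f$ is holomorphic on the open set $B^n(0;r)$ and holomorphic along each full disc $L_v$, $v\in U$, Hartogs' lemma (Prop.~7, p.~41 of Narasimhan) or Shiffman's refinement yields holomorphicity on $B^n(0;r)\cup S_0(U)$. That invocation is indispensable and is missing from your proposal.
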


Note that Theorem \ref{CK21} reduces to the classical Forelli's theorem \cite{Forelli77} when $U=S^{2n-1}$. At this point, we would like to give two remarks regarding Theorem \ref{main theorem}. First, the analyticity of the given function $f$ depends on the local behavior of $f$ near the two specific leaves of $S_0(F)$. Therefore, the theorem can be regarded as a localization of Forelli's theorem. Second, in contrast to the case of Theorem \ref{CK21}, the suspension in Theorem \ref{main theorem} needs not to be generated by an open subset of $S^{2n-1}$ in general; we will construct a nowhere dense Forelli suspension in Example \ref{nowhere dense Forelli suspension} of Section \ref{Sect 4}.

\subsection{Structure of paper, and remarks}
Forelli's theorem  has been generalized to various directions and several works in this line of research were motivated by \cite{Chirka06} (see, for example, \cite{KPS09}, \cite{JKS13}, \cite{JKS16}, \cite{Krantz18}, \cite{Sadullaev}, \cite{CK21},  just to name a few). Chirka proved in \cite{Chirka06} that the set of complex lines in \cite{Forelli77} can be replaced with a singular foliation of $B^2-\{0\}$ by holomorphic curves transversal at the origin to obtain the same conclusion. This work was generalized further to a higher-dimensional principle \cite{JKS13} and finally by the author and K.-T. Kim to the case where the foliation is parametrized by an open subset of $S^{2n-1}$ (Theorem 1.3 in \cite{CK21}).  Theorem 1.3 \cite{CK21} in particular answered the second question in Section 6  of \cite{Chirka06} (p.\ 219).
 
The proof of Theorem 1.3 in \cite{CK21} can be reduced to the case of standard suspension (Theorem \ref{CK21}) by applying the local analysis of \cite{JKS13} and the Baire category theorem. Then the proof of Theorem \ref{CK21} was settled in two steps:
\bigskip
\begin{itemize}
		\setlength\itemsep{0.1em}
\item [(1)] The formal Taylor series $S_f$ of $f$ is of holomorphic type.
\item [(2)] The formal series $S_f$ converges uniformly on $B^n(0;r)$ for some $r>0$.\\
\end{itemize}
\noindent
Then $f\equiv S_f$ on $B^n(0;r)$ and moreover, $f$ becomes holomorphic on $B^n(0;r)\cup S_0(U)$ by Hartogs' lemma (see Prop.7 in p.41 of \cite{Nara95}). Although the openness of the set $U\subset S^
{2n-1}$ was used in Step (1), it turned out that $U$ only needs to have a $\textit{nonpluripolar}$ direction set $U'$ to guarantee the uniform convergence of $S_f$ in Step (2). Therefore, several researchers including  K.-T. Kim, A. Sadullaev were led to ask whether the openness condition on $U$ can be weakened to obtain a further generalization of Forelli's theorem. The purpose of this paper is to provide an answer (Theorem \ref{main theorem}) to the question. On the other hand, we do not know how to generalize Theorem \ref{main theorem} to the case of suspension of nonlinear Riemann surfaces.

 In this paper, we proceed to prove Theorem \ref{main theorem} by following the aforementioned steps. In Section \ref{Sect 2}, we characterize formal Forelli suspensions with a mild adjustment on formal power series analysis of \cite{CK21}. In Section \ref{Sect 3}, we show that the local characterization of normal suspensions follows from the results of \cite{LevenMol88}, \cite{CK21}, and the pluripotential theory. See also \cite{Sadullaev}. We also give the proof of Theorem \ref{main theorem} in this section. Finally, we construct several examples of suspensions in Section \ref{Sect 4} which in particular indicate that a formal Forelli suspension is not necessarily normal, nor vice versa.

\subsection*{Acknowledgements}
The author would like to thank professor Kang-Tae Kim for various helpful discussions, professor Azimbay Sadullaev for his suggestion on a generalization of Theorem \ref{CK21}, and Seungjae Lee for helpful comments. The author also wishes to express his deep gratitute to the referees for their valuable comments on the article. Most parts of the paper were written while the author was supported by the National Research Foundation of Korea (NRF-4.0019528) at Pohang University of Science and Technology. The paper was completed at Pusan National University where the author is currently supported by the National Research Foundation of Korea (NRF-2018R1C1B3005963,  NRF-2021R1A4A1032418).
\section{Formal Forelli Suspensions}\label{Sect 2}

In this section, we settle the following 
\begin{theorem}\label{formal Forelli suspension theorem}
	A suspension is a formal Forelli suspension if, and only if, it has a nonsparse leaf.
\end{theorem}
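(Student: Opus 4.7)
My plan is to prove the two implications separately, via a formal power-series computation for sufficiency and an explicit polynomial counterexample for necessity. The key link is that restricting $f$ to the line $L_v$ produces polynomials in $(v,\bar v)$ whose vanishing is exactly what the nonsparse hypothesis controls.

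For the sufficiency direction, assume $L_{v_0}$ is a nonsparse leaf generated by some $v_0 \in \bar F$, and let $f$ satisfy the two conditions. Substituting $z \mapsto zv$ into the formal Taylor series $S_f = \sum C_I^J z^I \bar z^J$ produces a one-variable formal series whose $(a,b)$-coefficient is $P_{a,b}(v) = \sum_{|I|=a,\,|J|=b} C_I^J v^I \bar v^J$. Since $z \mapsto f(zv)$ is holomorphic for every $v \in F$, all the $\bar z$-derivatives of this restriction at $0$ vanish, forcing $P_{a,b}(v) = 0$ on $F$, and then on $\bar F$ by continuity, for every $a \geq 0$ and $b \geq 1$. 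The polynomial $P_{a,b}$ is bihomogeneous of bidegree $(a,b)$, so $P_{a,b}(zw) = z^{a}\bar z^{b} P_{a,b}(w)$, and hence $P_{a,b}$ vanishes on the whole cone $S_0(\bar F)$, and in particular on $S_0(U \cap \bar F)$ for every neighborhood $U$ of $v_0$. Splitting into real and imaginary parts yields real polynomials to which the nonsparse hypothesis applies, forcing $P_{a,b} \equiv 0$ on $\CC^n$. Linear independence of the monomials $\{v^I \bar v^J\}_{I,J}$ then gives $C_I^J = 0$ whenever $J \neq 0$, so $S_f$ is of holomorphic type.

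For the necessity direction I would argue the contrapositive. If no leaf of $S_0(F)$ is nonsparse, each $v \in \bar F$ admits an open neighborhood $U_v \subset S^{2n-1}$ and a nonzero real polynomial $P_v$ vanishing on $S_0(U_v \cap \bar F)$; compactness of $\bar F \subset S^{2n-1}$ extracts a finite subcover $U_{v_1},\ldots,U_{v_k}$, and the product $P := P_{v_1} \cdots P_{v_k}$ is a nonzero real polynomial vanishing on $S_0(\bar F) \supset S_0(F)$. The function $f(z) := \bar z_1 P(z)$ is then a polynomial in the real coordinates (hence smooth) that vanishes identically on $S_0(F)$, so $z \mapsto f(zv) \equiv 0$ is trivially holomorphic for each $v \in F$; yet every monomial of $f$ carries a positive power of $\bar z_1$, so $S_f$ has a nonzero coefficient $C_I^J$ with $J \neq 0$ and is not of holomorphic type. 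Hence $S_0(F)$ fails to be a formal Forelli suspension.

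No step here is particularly deep. The main subtlety is that the nonsparse hypothesis is phrased in terms of polynomials vanishing on the cone $S_0(U \cap \bar F)$ rather than on $\bar F$ itself; one bridges this gap by exploiting the bihomogeneity of $P_{a,b}$ in the sufficiency direction and by multiplying finitely many local polynomials (via compactness of $\bar F$) in the necessity direction.
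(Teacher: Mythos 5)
Your proposal is correct and follows essentially the same route as the paper: for sufficiency, the paper also reduces to the vanishing of the bihomogeneous pieces of $S_f$ on the cone over $\bar F$ (phrased there via the operator $\bar E=\sum \bar z_k\,\partial/\partial\bar z_k$, which acts on a monomial $z^I\bar z^J$ by multiplication by $|J|$) and then invokes nonsparseness on real and imaginary parts; for necessity, it uses the same compactness argument and the product $P_{v_1}\cdots P_{v_m}$. Your only departures are cosmetic — extracting the coefficients $P_{a,b}$ directly instead of through $\bar E$, and multiplying by $\bar z_1$ to make the failure of holomorphic type immediate where the paper instead notes that a nonconstant real polynomial can never be of holomorphic type.
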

Let $z=(z_1,\dots,z_n)=(x_1,y_1,\dots,x_n,y_n)$ be the standard complex coordinate system on $\mathbb{C}^n$, where $z_k=x_k+iy_k$ for each $k\in \{1,\dots,n\}$. Recall the multi-index notation as follows: 
\begin{align*}
	\alpha=(\alpha_1,\ldots,\alpha_n),~|\alpha|=\alpha_1+\cdots+\alpha_n,~\alpha!=\alpha_1!\cdots \alpha_n!,~\text{and}~z^{\alpha}=z_1^{\alpha_1} \cdots z_n^{\alpha_n}.
\end{align*}
We also define the following differential operators
\begin{equation}\label{differentiation}
	\frac{\partial^{|\alpha|}f}{\partial z^\alpha}:=\frac{\partial^{\alpha_1+\cdots+\alpha_n}f}{\partial z_1^{\alpha_1}\cdots \partial z_n^{\alpha_n}}, ~\frac{\partial^{|\alpha|}f}{\partial \bar{z}^\alpha}:=\frac{\partial^{\alpha_1+\cdots+\alpha_n}f}{\partial \bar{z}_1^{\alpha_1} \cdots \partial \bar{z}_n^{\alpha_n}},
\end{equation}
whenever $f\in C^{|\alpha|}(\Omega)$ for some open set $\Omega \subset \mathbb{C}^n$. If $S=\sum C_{I}^{J}z^{I}\bar{z}^{J}\in \mathbb{C}[[z_1,\dots,$ $z_n]]$, then we define 
\[
\frac{\partial^{|\alpha|}S}{\partial z^\alpha}:=\sum C_{I}^{J}\cdot \frac{\partial^{|\alpha|}z^{I}}{\partial z^\alpha}\cdot \bar{z}^{J}, ~\frac{\partial^{|\alpha|}S}{\partial \bar{z}^\alpha}:=\sum C_{I}^{J}\cdot \frac{\partial^{|\alpha|}\bar{z}^{J}}{\partial \bar{z}^\alpha}\cdot {z}^{I}.
\]

 We first prove that $S_0(F)$ is not a formal Forelli suspension under the assumption that $S_0(F)$ is sparse. Then for each $v\in \bar{F}$, there exist an open neighborhood $U_v\subset S^{2n-1}$ of $v$ and a nonconstant polynomial $P_v\in \mathbb{R}[x_1,y_1,\dots,x_n,y_n]$ such that $P_v\equiv 0$ on $S_0(\bar{F}\cap U_v)$. In complex coordinates, $P_v=\sum C_{I}^{J}z^{I}\bar{z}^{J}$ for some complex numbers $C_{I}^{J}$ that depend on $v$. Here, $C_{I}^{J}\neq0$ for some $J\neq 0$ as $P_v$ is a nonconstant real polynomial. So each $P_v$ is not of holomorphic type. Since $\mathcal{U}:=\{U_v:v\in F\}$ is an open cover of the compact set $\bar{F}$, there exists a finite subcover $\{U_{v_1},\dots,U_{v_m}\}$ of $\mathcal{U}$. Then the polynomial $P:=P_{v_1}\cdot P_{v_2}\cdots P_{v_m}$ is smooth and holomorphic along $S_0(F)$ but it is not of holomorphic type.
 
Conversely, suppose that $S_0(F)$ has a nonsparse leaf and let $f:B^n\to \mathbb{C}$ be a function satisfying the following two conditions:
\begin{enumerate}
	\item $f\in C^{\infty}(0)$, and
	\item $f$ is holomorphic along $S_0(F)$.
\end{enumerate}
Then we are to show that $f$ has a formal Taylor series $S$ of holomorphic type. Recall that $S$ is defined by
\[
S:=\sum C_{I}^{J}z^I\bar{z}^{J},
\]
where
\begin{equation}\label{formal Taylor series}
C_{I}^{J}:=\frac{1}{I!J!}\frac{\partial^{|I|+|J|}f}{\partial z^I\partial \bar{z}^{J}}(0).
\end{equation}

Then it is straightforward to check that $S(zv)=f(zv)$ for each $z\in B^1,~v\in F$. So $z\in B^1\to S(zv)$ is also convergent and holomorphic for each $v\in F$. This implies that 
\begin{equation}\label{Formal holomorphicity}
	\bar{E}S\equiv 0~\text{on}~S_0(F), 
\end{equation}
where 
\[
E := \sum\limits_{k=1}^n z_k \frac\partial{\partial z_k}.
\]

We first prove the claim in the case that $S$ is a polynomial of finite 
degree. Note that (\ref{Formal holomorphicity}) and the continuity of $\bar{E}S$ implies
\[
\textup{Re}(\bar{E}S)\equiv 0~\text{and}~ \textup{Im}(\bar{E}S)\equiv 0~\textup{on}~S_0(\bar{F}).
\]
Then, as $S_0(F)$ has a nonsparse leaf, it must be that 
\[
\textup{Re}(\bar{E}S)=\textup{Im}(\bar{E}S)\equiv 0~\text{on}~\mathbb{C}^n.
\]
Therefore, $\bar{E}S\equiv 0$ on $\mathbb{C}^n$. Consider the monomial term $S_{IJ}$ in $S$ of a fixed multi-degree 
$(I, J) := (i_1,\ldots,i_n,j_1,\ldots,j_n)$.
The monomial term in $\bar{E}S$ of multi-degree $(I, J)$ is precisely 
$\bar E (S_{IJ})$.   Then $\bar{E}S \equiv 0$ implies
\[
|J|C_{i_1,\ldots,i_n}^{j_1,\ldots,j_n}=0.
\] 
Consequently, $C_{i_1,\ldots,i_n}^{j_1,\ldots,j_n}=0$, whenever 
$J=(j_1,\ldots,j_n) \neq 0$.

Now let $S=\sum C_{I}^{J}z^I\bar{z}^J$ be any formal power series. Then, for each positive integer $m$, 
we have 
\[
\bar{E}(S_m)=(\bar{E}S)_m=0~\text{on}~S_0(F),
\] 
where 
\[
S_m:=\sum_{|I|+|J|= m}C_{I}^{J}z^I\bar{z}^J.
\] 
Thus we conclude from the previous arguments on finite polynomials that 
$S$ is of holomorphic type.\hfill $\Box$
\section{Normal Suspensions}\label{Sect 3}
We denote by $\textup{PSH}(\mathbb{C}^n)$ the set of all plurisubharmonic functions defined on $\mathbb{C}^n$. A set $E\subset \mathbb{C}^n$ is called $\textit{pluripolar}$ if there exists a nonconstant function $u\in \textup{PSH}(\mathbb{C}^n)$ such that $E \subset \{z\in \mathbb{C}^n: u(z)=-\infty\}$. 

\begin{definition}[\cite{Siciak81}]
	\textup{ For each positive 
		integer $n$ and a subset $E$ of $\mathbb{C}^n$, define
		\[ 
		\mathcal{L}_n:=\{u\in \textup{PSH}(\mathbb{C}^n):\exists C_u\in \mathbb{R}
		~ \text{such that}~u(z)\leq C_u+\textup{log}\,(1+\|z\|)
		~ \forall z\in \mathbb{C}^n \}, 
		\]  
		\[
		V_E(z):=\textup{sup}\,\{u(z)\colon u\in \mathcal{L}_n ,u\leq 0 ~\text{on}~ E\},
		~ \forall z\in \mathbb{C}^n.
		\]
		Then the $\textit{logarithmic capacity}$ of $E$ is defined to be 
		\[
		c_n(E):=e^{-\gamma_n(E)},
		\]
		where
	}
	\begin{align*}
		\gamma_n(E)&:=\limsup\limits_{\|z\|\to \infty}\,(V^*_E(z)-\textup{log}\,\|z\|),\\
		V^*_E(z)&:=\limsup\limits_{w\to z}V_E(w).
	\end{align*}
\end{definition}
\noindent 
Then it follows from Theorem 3.10 in \cite{Siciak81} that a set $E\subset \mathbb{C}^n$ is pluripolar if and only if $c_n(E)=0$. Now we can state the following
\begin{theorem}[\cite{CK21}]\label{CK Lelong}
A suspension $S_0(F)$ in $\mathbb{C}^n$ is normal if $c_{n-1}(F')\neq 0$.
\end{theorem}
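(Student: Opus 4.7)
The plan is to convert fibrewise convergence of $S$ into a $\limsup$ bound for a sequence in the Lelong class $\mathcal{L}_{n-1}$, upgrade this via pluripotential theory to a uniform bound on compacta, and then re-homogenize to obtain uniform convergence of $S$ near the origin. Write $S=\sum_{m\ge 0}S_m$, with $S_m$ the homogeneous polynomial of degree $m$; for each $v\in F$, convergence of $S(tv)=\sum_m t^m S_m(v)$ in $\mathbb{C}[[t]]$ gives $\limsup_m|S_m(v)|^{1/m}<\infty$. Since $F'=\bigcup_{i=1}^n F'_i$ and a finite union of pluripolar sets is pluripolar, $c_{n-1}(F')\ne 0$ forces some $F'_i$ to be non-pluripolar; after permuting coordinates I assume this happens for $i=n$.

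Dehomogenize by setting $\tilde S_m(w):=S_m(w_1,\dots,w_{n-1},1)$, a polynomial of degree at most $m$, and $u_m(w):=\tfrac{1}{m}\log|\tilde S_m(w)|$, so that $u_m\in\mathcal{L}_{n-1}$. The homogeneity identity $S_m(v)=v_n^m\tilde S_m(v'_n)$ translates the pointwise bound into
\[
\limsup_{m\to\infty}u_m(v'_n)=\log\bigl(\limsup_m|S_m(v)|^{1/m}\bigr)-\log|v_n|<\infty
\]
for every $v\in F$ with $v_n\ne 0$. A double countable decomposition --- first $F'_n=\bigcup_{k\in\mathbb{N}}\{w:\limsup_m u_m(w)\le k\}$, then each level set as $\bigcup_{m_0\in\mathbb{N}}\{w:u_m(w)\le k+1\text{ for all }m\ge m_0\}$ --- combined with the fact that a countable union of pluripolar sets is pluripolar, yields integers $k_0,m_0$ and a non-pluripolar set $E\subset F'_n$ on which $u_m\le k_0+1$ holds simultaneously for every $m\ge m_0$.

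The core analytic step is then the Siciak comparison applied uniformly in $m$: each $u_m-(k_0+1)$ lies in $\mathcal{L}_{n-1}$ and is $\le 0$ on $E$, so by the definition of the Siciak extremal function
\[
u_m(w)\le k_0+1+V^*_E(w)\quad\text{for all }w\in\mathbb{C}^{n-1}\text{ and all }m\ge m_0.
\]
Non-pluripolarity of $E$ makes $V^*_E$ a locally bounded element of $\mathcal{L}_{n-1}$ (cf.\ \cite{Siciak81}), so there is a constant $C$ with $V^*_E(w)\le C+\log(1+\|w\|)$, yielding $|\tilde S_m(w)|\le\bigl(e^{k_0+1+C}(1+\|w\|)\bigr)^m$ uniformly for $m\ge m_0$.

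Re-homogenizing, for $z=(z',z_n)\in\mathbb{C}^n$ with $z_n\ne 0$ and $m\ge m_0$,
\[
|S_m(z)|=|z_n|^m|\tilde S_m(z'/z_n)|\le\bigl(e^{k_0+1+C}(|z_n|+\|z'\|)\bigr)^m,
\]
and by continuity of $S_m$ the same bound extends to the hyperplane $z_n=0$. On a sufficiently small ball $B^n(0;r)$ this dominates $\sum_m|S_m(z)|$ by a convergent geometric series, giving uniform convergence and hence normality of $S_0(F)$. The chief technical obstacle I anticipate is the double decomposition step: it is essential to obtain pointwise control on a non-pluripolar set that is \emph{uniform} in $m\ge m_0$, so that a single Siciak majorant applies simultaneously to the entire tail of $\{u_m\}$; without this uniformity, an extra Hartogs-type argument would be required to bridge $\limsup$ control and a per-function upper bound.
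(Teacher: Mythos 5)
Your proof is correct, and it follows the same overall skeleton as the paper's argument (dehomogenize along a non-pluripolar direction set $F'_i$, convert fibrewise convergence into a pointwise $\frac1m\log$-growth bound there, upgrade to a uniform bound, re-homogenize), but the way you obtain the uniform bound is genuinely different. The paper delegates exactly that step to Proposition \ref{estimate}, quoted from \cite{CK21} as a higher-dimensional Lelong-type theorem whose proof is described there as ``more elementary and different'' from the pluripotential-theoretic route; you instead prove the needed estimate from scratch by the double countable pluripolar decomposition followed by the Siciak extremal-function comparison $u_m\le k_0+1+V_E^*$ with $V_E^*\in\mathcal{L}_{n-1}$ for non-pluripolar $E$ --- which is essentially the classical \cite{LevenMol88}/\cite{Sadullaev} argument, and amounts to an inline proof of Proposition \ref{estimate}. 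A second, smaller divergence: the paper (in its $n=2$ sketch) bounds the one-variable polynomials $P_n$ on the circle and then extracts coefficient bounds $|a_{n-k,k}|<M^n$ via Cauchy estimates before summing, whereas you bound the homogeneous parts $S_m$ directly by re-homogenization, which is cleaner and works verbatim in all dimensions. Two cosmetic points you should patch: when $S_m\equiv 0$ the function $u_m\equiv-\infty$ is not plurisubharmonic, so those indices must be discarded (the final bound holds for them trivially); and to invoke $V_E^*\in\mathcal{L}_{n-1}$ with a finite majorant $C+\log(1+\|w\|)$ you should first replace $E$ by a bounded non-pluripolar subset $E\cap B^{n-1}(0;R)$, which exists by one more countable-union argument (cf.\ \cite{Siciak81}). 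Neither affects the validity of the proof.
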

For a historical account on the theorem, see Section 3.2 in \cite{CK21}. Although this result was also known earlier (see, for example, \cite{LevenMol88}, \cite{Sadullaev}), the method in \cite{CK21} is more elementary and different from the previous works.  The crux of the proof of Theorem $\ref{CK Lelong}$ turns out to be the following higher-dimensional generalization of Lelong's theorem (see Theorem 2 in \cite{Lelong51} and Prop.4.1 in \cite{CK21}).
\begin{proposition} [\cite{CK21}] \label{estimate}
Let $\{P_k\}\subset \mathbb{C}[z_1,\ldots,z_n]$ be a sequence of 
polynomials with $\textup{deg}\,P_k\leq k$ for each positive integer $k$. If $F\subset \mathbb{C}^n$ is a set satisfying $c_n(F)\neq 0$ and
\[
\limsup\limits_{k\to \infty}\frac{1}{k}\, \textup{log}\,|P_k(z)|<\infty\, \text{for each}~ z\in F,
\]
Then, for each compact subset $K$ of $\mathbb{C}^n$, there exists a constant $M=M(K)>0$ such that $\frac{1}{k}\,\textup{log}\,|P_k(z)|<\textup{log}\,M$ for any $z\in K$, $k\geq 1$. 
\end{proposition}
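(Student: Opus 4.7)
The plan is to realize each $\frac{1}{k}\log|P_k(z)|$ as an element of the Lelong class $\mathcal{L}_n$ and then to extract a non-pluripolar subset of $F$ on which the entire sequence $\{u_k\}$ is uniformly bounded above. Once such a subset is in hand, the Siciak extremal function will transfer the bound from that subset to any prescribed compact set.

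More concretely, set $u_k(z) := \frac{1}{k}\log|P_k(z)|$. Since $\deg P_k \leq k$, there is a constant $A_k$ with $|P_k(z)| \leq A_k(1+\|z\|)^k$, so $u_k \in \mathcal{L}_n$ for each $k$. For each positive integer $N$, introduce the level set
\[
E_N := \{z \in F : u_k(z) \leq N \text{ for every } k \geq 1\}.
\]
The hypothesis $\limsup_{k\to\infty} u_k(z) < \infty$ for $z \in F$ supplies, for each $z \in F$, a bound that holds for all sufficiently large $k$; the finitely many remaining values $u_1(z),\ldots, u_{k_0-1}(z)$ (possibly $-\infty$) are then absorbed by enlarging $N$. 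Hence $F = \bigcup_{N=1}^{\infty} E_N$. Since a countable union of pluripolar sets is pluripolar and $c_n(F) \neq 0$, some $E_{N_0}$ must be non-pluripolar; abbreviate $E := E_{N_0}$.

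Now invoke the Siciak extremal function of $E$. Because $u_k - N_0 \in \mathcal{L}_n$ and $u_k - N_0 \leq 0$ on $E$, the definition of $V_E$ yields
\[
u_k(z) - N_0 \leq V_E(z) \leq V_E^*(z) \qquad \text{for every } z \in \CC^n.
\]
Non-pluripolarity of $E$ forces $V_E^* \in \mathcal{L}_n$, so $V_E^*$ is plurisubharmonic, upper semicontinuous, and in particular locally bounded above. Given a compact set $K \subset \CC^n$, put $M_0 := \sup_K V_E^* < \infty$. Then for all $k \geq 1$ and every $z \in K$,
\[
\frac{1}{k}\log|P_k(z)| = u_k(z) \leq M_0 + N_0,
\]
and the conclusion holds with $M := e^{M_0 + N_0 + 1}$.

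The crux, and the only place where $c_n(F) \neq 0$ is really used, is the Baire-type step: one must promote the pointwise finiteness of $\limsup_k u_k$ on $F$ to a simultaneous pointwise bound $u_k \leq N_0$ on a non-pluripolar subset. This is where the $\sigma$-ideal property of pluripolar sets (equivalently, the countable subadditivity of the relation $c_n = 0$) enters in an essential way. The remainder of the argument is a clean application of the definition of $V_E$ combined with the classical fact that $V_E^* \in \mathcal{L}_n$ whenever $E$ is non-pluripolar, mirroring the role played by the logarithmic potential in Lelong's original planar argument.
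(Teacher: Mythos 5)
Your proof is correct, and it follows essentially the same route as the argument this paper is citing (the paper itself quotes Proposition \ref{estimate} from \cite{CK21} without reproducing its proof): realize $\frac{1}{k}\log|P_k|$ as elements of $\mathcal{L}_n$, use the $\sigma$-ideal property of pluripolar sets to extract a non-pluripolar level set $E_{N_0}$ on which the whole sequence is uniformly bounded, and then transfer the bound to compacta via $V_{E}^*\in\mathcal{L}_n$. No gaps.
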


We sketch the proof of Theorem \ref{CK Lelong} for the case of $n=2$, assuming Proposition \ref{estimate}. Let 
\[
S(z_1,z_2)= \sum a_{i,j}{z_1}^{i}{z_2}^{j}
\]
 be a formal power series for which $S_{a_1,a_2}(t):=S(a_1t,a_2t)$ has a positive radius of 
convergence $R_{(a_1,a_2)}>0$ for every $(a_1,a_2)\in F$. Without loss of generality, assume that $c_1(F'_1)\neq 0$. Then we are to show that $S$ is holomorphic on some open neighborhood 
of $0$ in $\mathbb{C}^2$.  Note that, for any $b\in F'_1$, $S_{(1,b)}$ converges absolutely and uniformly on $\frac{1}{2}R_{(1,b)}$. So it can be rearranged as follows:
\[
S_b(t):=S(t,bt)=\sum_{n=0}^{\infty}(\sum_{j=0}^{n}a_{n-j,j}b^j)t^n = \sum_{n=0}^{\infty}P_n(b)t^n,
\]
where
\[
P_n(z):=\sum_{j=0}^{n}a_{n-j,j}z^j\in \mathbb{C}[z].
\]
 The root test implies
\[
\limsup\limits_{n\to \infty}\frac{1}{n}\,\textup{log}\,|P_n(b)|<\infty\,\text{for each} ~b\in F'_1.
\]
 Recall that $c_1(F'_1)\neq 0$. So by Proposition \ref{estimate}, there exists a constant $M>0$ satisfying
\[
\frac{1}{n}\, \textup{log}\,|P_n(b)|<M
 \]
 or equivalently,
 \[
 |P_n(b)|<M^n\,\text{for all}\,b\in S^1.
 \] 
 The Cauchy estimate implies that, for each positive integer $n$ and $0\leq k\leq n$, we have $|a_{n-k,k}|<M^n$. Then
\[
\sum_{i+j=n}
|a_{i,j}{z_1}^{i}{z_2}^{j}|<n\cdot 2^{-n}
\] 
whenever $|z_1|,|z_2|<\frac{1}{2M},~n\geq 1.$ Then we conclude from Weierstrass' $M$ test that $S$ is uniformly convergent on a neighborhood of the origin. \hfill $\Box$

The following theorem provides a partial converse to Theorem \ref{CK Lelong}.
\begin{theorem}[\cite{LevenMol88}]\label{Levenberg Molzon}
A suspension $S_0(F)$ is not normal if $F'$ is a $F_{\sigma}$ set satisfying $c_{n-1}(F')=0$; that is, there exists a formal series $S\in \mathbb{C}[[z_1,\dots,z_n]]$ such that $z\in B^1\to S(zv)$ converges for each $v\in F$ but $S$ does not converge uniformly on any open neighborhood of the origin.
\end{theorem}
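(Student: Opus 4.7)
The plan is to construct the required formal series $S$ as an infinite sum of homogeneous polynomials $P_k$ of strictly increasing degrees $d_k$, each bounded on $F$ while $\|P_k\|_{B^n}$ grows super-exponentially in $d_k$. Such polynomials exist precisely because $F'$ is pluripolar; their construction inverts the chain of estimates behind Proposition \ref{estimate}, and the mechanism that produces them is Siciak's variational description of the extremal plurisubharmonic function.

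Using the $F_\sigma$ hypothesis, write $F' = \bigcup_{k \ge 1} K_k$ as a nested increasing union of compact pluripolar sets. For each $k$, Siciak's theorem gives $V_{K_k}^* \equiv +\infty$ on $\mathbb{C}^{n-1}$, and the polynomial definition of $V_{K_k}$ furnishes, for any chosen $w_k$ with $|w_k| = k$, a polynomial $Q_k \in \mathbb{C}[w_1, \dots, w_{n-1}]$ of some degree $d_k$ with $\|Q_k\|_{K_k} \le 1$ and $|Q_k(w_k)| \ge e^{k d_k}$; one arranges $d_k \nearrow \infty$ strictly. Homogenize: for an index $i_k$ with $w_k \in F'_{i_k}$, set
\[
P_k(z) := z_{i_k}^{d_k}\, Q_k(z'_{i_k}),
\]
a homogeneous polynomial of degree $d_k$ on $\mathbb{C}^n$ (with $z'_{i_k}$ defined in analogy with the $v'_i$ of the paper). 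Since $|v_{i_k}| \le 1$ for $v \in S^{2n-1}$, the bound $|P_k(v)| \le 1$ holds on $\{v \in F : v_{i_k} \ne 0,\ v'_{i_k} \in K_k\}$. To promote this to a uniform bound on all of $F$, one pastes the construction across the charts $\{v_i \ne 0\}$; equivalently and more cleanly, the cone $\widehat F := \mathbb{C}\cdot F$ is pluripolar in $\mathbb{C}^n$ (its projectivization is pluripolar in $\mathbb{P}^{n-1}$ because each chart-image $F'_i$ is pluripolar), so Siciak applied directly in $\mathbb{C}^n$ together with homogeneous decomposition under the $S^1$-action $v \mapsto e^{i\theta} v$ on $\widehat F$ furnishes the $P_k$'s with the required properties.

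Set $S(z) := \sum_{k \ge 1} P_k(z)$; because the $d_k$ are distinct, this is a well-defined formal power series in $\mathbb{C}[[z_1, \dots, z_n]]$. For each $v \in F$ the one-variable series $S(zv) = \sum_k P_k(v)\, z^{d_k}$ has coefficients bounded by $1$ for all but finitely many $k$, so it has positive radius of convergence. On the other hand, homogeneity together with the chosen growth of $Q_k$ force
\[
\sup_{B^n(0; r)} |P_k| \;=\; r^{d_k}\, \|P_k\|_{B^n} \;\ge\; \Bigl(\tfrac{r e^k}{k}\Bigr)^{d_k},
\]
which tends to $+\infty$ with $k$ for every fixed $r > 0$. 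A Cauchy coefficient estimate then pinpoints monomials of $P_k$ whose coefficients violate the Cauchy--Hadamard bound on every polydisc centered at $0$, so $S$ does not converge uniformly on any neighborhood of the origin. The principal technical obstacle is the coordinate-chart bookkeeping in the homogenization step; it is most neatly handled by working with the cone $\widehat F$ in $\mathbb{C}^n$ throughout, where pluripolarity, Siciak's theorem, and circular symmetry all speak the same language.
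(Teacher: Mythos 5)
The paper does not actually prove this statement: it is imported as an external result from Levenberg--Molzon \cite{LevenMol88}, so there is no internal proof to compare against. Your reconstruction follows the standard route of that paper (and of Lelong's one-variable antecedent): exhaust the pluripolar $F_\sigma$ set by compacta, use Siciak's identity between $V_K$ and the polynomial extremal function to manufacture polynomials bounded by $1$ on $K_k$ but of super-exponential size at a test point, homogenize, and sum over distinct degrees. The skeleton is right, and your decision to abandon the chart-by-chart version in favour of the cone $\widehat F=\CC\cdot F$ is the correct way to handle the bookkeeping: $\widehat F$ is pluripolar because each piece $\widehat F\cap\{z_i\neq 0\}$ is the preimage of the pluripolar set $F'_i$ under the submersion $z\mapsto z'_i$ and countable unions of pluripolar sets are pluripolar; and for a circled compact $E$ the averaging $Q^{(j)}(z)=\frac{1}{2\pi}\int_0^{2\pi}Q(e^{i\theta}z)e^{-ij\theta}\,d\theta$ gives $\|Q^{(j)}\|_E\le\|Q\|_E$, so some homogeneous component of $Q$ inherits both the smallness on $E$ and, up to a factor $\deg Q+1$, the largeness at the test point.

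One step genuinely needs repair: ``one arranges $d_k\nearrow\infty$ strictly'' is asserted but is not available for free. Siciak's theorem hands you a polynomial of uncontrolled degree, and after extracting a homogeneous component the degree drops unpredictably; if two of your $P_k$'s occupy the same homogeneous degree, they may cancel, and your divergence argument --- which requires the degree-$d_k$ homogeneous part of $S$ itself to have large sup norm on every small ball --- collapses. The fix is routine but must be stated: either replace $Q_k$ by a power $Q_k^m$ (this preserves $\|Q_k^m\|_{K_k}\le 1$ and the normalized value $|Q_k(w_k)|^{1/\deg Q_k}$ while pushing the degree past all previously used ones), or multiply the extracted homogeneous piece by $\langle z,w^\ast\rangle^m$, where $w^\ast\in S^{2n-1}$ is a point where its supremum over $\bar B^n$ is attained, and compensate by choosing the Siciak constant at stage $k$ large relative to $\max_{j<k}d_j$, which is known before stage $k$ begins. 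A second, minor point: you need the unregularized $V_{K}$ (which is what the polynomial extremal function computes) to be large at the chosen test point, not just $V_K^\ast$; since the two agree outside a pluripolar set this costs nothing, but it should be said when you pick $w_k$. With these patches your argument is complete and agrees in substance with the proof in \cite{LevenMol88}.
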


We say that a set $E\subset \mathbb{C}^n$ is $\textit{L-regular}$ at $a\in \bar{E}$ if $V^{\ast}_E(a)=0$. $E$ is said to be $\textit{locally L-regular}$ at $a\in \bar{E}$ if $E\cap B^n(a;r)$ is $L$-regular for each $r>0$. We remark that $E$ is nonpluripolar if, and only if, $E$ is locally $L$-regular at some point; if a set $E\subset \mathbb{C}^n$ is locally $L$-regular at $a\in \bar{E}$, then $E$ is nonpluripolar as $V^{\ast}_{E}\equiv +\infty$ on $\mathbb{C}^n$ whenever $E$ is pluripolar. Conversely, if there is no point at which $E$ is locally $L$-regular, then $E$ is pluripolar as the set
\[
\{z\in \bar{E}:\,E~\text{is not locally} ~L\text{-regular at}~z\}
\]
is known to be always pluripolar. See p.186 of \cite{Klimek91}.  Then the discussions in this section are summarized in the following 
\begin{theorem}\label{local characterization of Normal suspensions}
A suspension $S_0(F)$ is normal if it has a regular leaf. Conversely, if $F'$ is an $F_{\sigma}$ set and $S_0(F)$ has no regular leaf, then $S_0(F)$ is not normal.
\end{theorem}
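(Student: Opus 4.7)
The plan is to deduce Theorem \ref{local characterization of Normal suspensions} by combining the two quoted results, Theorem \ref{CK Lelong} and Theorem \ref{Levenberg Molzon}, with the characterization of pluripolarity via local $L$-regularity recalled in the paragraph preceding the statement.

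For the direct implication (regular leaf implies normality), the argument should be essentially a single line. By hypothesis there is $v \in \bar{F}$ and an index $i$ with $v_i \neq 0$ such that $F'_i$ is locally $L$-regular at $v'_i$. As observed just above the statement, local $L$-regularity at a single boundary point forces $F'_i$ to be nonpluripolar; since $F'_i \subset F'$, monotonicity of pluripolarity makes $F'$ nonpluripolar too, i.e.\ $c_{n-1}(F') \neq 0$. Theorem \ref{CK Lelong} then gives that $S_0(F)$ is normal.

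For the converse, assume $F'$ is $F_\sigma$ and that no leaf of $S_0(F)$ is regular; I want to apply Theorem \ref{Levenberg Molzon}, so it suffices to show $c_{n-1}(F') = 0$, i.e.\ $F'$ is pluripolar. Since pluripolar sets form a $\sigma$-ideal and $F' = F'_1 \cup \cdots \cup F'_n$, it is enough to prove that each $F'_i$ is pluripolar. Suppose for contradiction that some $F'_i$ is nonpluripolar. By the characterization recalled above, there exists a point $a \in \bar{F'_i}$ at which $F'_i$ is locally $L$-regular. The key step is then to lift $a$ to a generator $v \in \bar{F}$ with $v_i \neq 0$ and $v'_i = a$, which would exhibit a regular leaf $L_v$ and contradict the hypothesis.

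The lifting itself is the only real subtlety in the proof. Starting from a sequence $z^{(k)} \in F$ with $z^{(k)}_i \neq 0$ and $(z^{(k)})'_i \to a$, compactness of $\bar{F} \subset S^{2n-1}$ lets me extract a subsequence with $z^{(k)} \to v \in \bar{F}$. From $|z^{(k)}_j|/|z^{(k)}_i| \to |a_j|$ together with the constraint $\|z^{(k)}\| = 1$, one obtains $|z^{(k)}_i|^2 \to 1/(1 + \|a\|^2) > 0$, so $v_i \neq 0$ and necessarily $v'_i = a$. Hence $L_v$ is a regular leaf, contradicting the no-regular-leaf assumption, and the converse is complete. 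I expect no genuine obstacles beyond verifying this norm-one lifting argument carefully; the two quoted theorems do all the analytic work once the pluripolarity dichotomy for $F'$ has been tied to the existence of a regular leaf.
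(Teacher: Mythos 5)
Your proof is correct and follows the same route as the paper: both directions reduce to Theorems \ref{CK Lelong} and \ref{Levenberg Molzon} via the dichotomy between pluripolarity of $F'$ and the existence of a locally $L$-regular point, using the fact (recalled before the statement) that the set of non-locally-$L$-regular points is always pluripolar. The only difference is that you spell out the lifting of a locally $L$-regular point $a\in\overline{F'_i}$ to a generator $v\in\bar{F}$ with $v_i\neq 0$ and $v'_i=a$ (via the identity $|z_i|^2=1/(1+\|z'_i\|^2)$ on the unit sphere), a step the paper's one-line converse (``if $S_0(F)$ has no regular leaf, then $F'$ is pluripolar'') leaves implicit; your verification of it is sound.
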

\begin{proof}
If $S_0(F)$ has a regular leaf, then $F'$ is $L$-regular at some point so it is nonpluripolar, i.e., $c_{n-1}(F')=0$. Therefore, $S_0(F)$ is a normal suspension by Theorem \ref{CK Lelong}. Conversely, if $S_0(F)$ has no regular leaf, then $F'$ is pluripolar. So it follows from Theorem \ref{Levenberg Molzon} that $S_0(F)$ is not normal if $F'$ is an $F_{\sigma}$ set and $S_0(F)$ has no regular leaf.
\end{proof}
\textit{Proof of Theorem \ref{main theorem}}. Let 
$f\colon B^n\to \mathbb{C} $ be a function that is smooth at the 
origin and holomorphic along a Forelli suspension $S_0(F)$. Then the 
formal Taylor series $S_f$ is of holomorphic type by Theorem \ref{formal Forelli suspension theorem}. Note that $S_f$ converges uniformly on $B^n(0;r)$ for some $r>0$ by Theorem \ref{local characterization of Normal suspensions}. Now $f=S_f$ is holomorphic on $B^n(0;r)$ and moreover, Shiffman's generalization of Hartogs' lemma \cite{Shiffman89} implies that there exist an open neighborhood $U\subset S^{2n-1}$ of the regular leaf of $S_0(F)$ such that $f|_{B^n(0;r)}$ extends to a holomorphic function defined on $B^n(0;r)\cup S_0(U)$. \hfill $\Box$
\section{Examples of Suspensions}\label{Sect 4}

\begin{example}\label{rational suspension}
\normalfont 
Fix a nonempty open subset $U$ of $S^{2n-1}$ and define 
\[
F:=\{(z_1,\dots,z_n)\in U: z_i\in \mathbb{Q}~\forall i\in \{1,...,n\}\}.
\]
 Then $F'$ is countable and therefore it is a pluripolar $F_{\sigma}$ set. Hence $S_0(F)$ is not normal. It is straightforward to check that $S_0(F)$ is nonsparse as $S_0(\bar{F})=S_0(U)$ has a nonempty interior and every polynomial in $\mathbb{R}[x_1,y_1,\dots,x_n,y_n]$ is real-analytic. Therefore, $S_0(F)$ is a dense formal Forelli suspension.
\end{example}

In the following, we identify $\mathbb{R}^{2n-1}$ with the set $\{(z_1,\dots,z_n)\in \mathbb{C}^n:  \textup{Im}\,z_1=0\}.$ 
\begin{example}\label{nowhere dense formal Forelli suspension}
	\normalfont 
	We construct a nowhere dense formal Forelli suspension which is not normal.
	Let $\{r_{k}\}$, $\{s_\ell\}\subset \mathbb{R}$ be two sequences that decreases from $\frac{\pi}{4}$ to $0$, increases from $0$ to $\frac{\pi}{2}$, respectively. Fix $v:=(1,0)\in S^3$ and for each positive integer $\ell$, define
	\begin{align*}
	\begin{gathered}
	F_{\ell}:=\{(\textup{cos}\,r_{k},\,e^{is_\ell}\textup{sin}\,r_{k}) \in   \mathbb{R}^3\cap S^3:k~ \text{is a positive integer}\},\\
	F:=\bigcup_{\ell=1}^{\infty}{F}_{\ell}.
	\end{gathered}
	\end{align*}
  Note that $S_0(F)$ is not normal as $F'$ is countable. We prove that $S_0(F)$ is a formal Forelli suspension by showing that $v\in \bar{F}$ generates a nonsparse leaf. 
	
	Fix an open neighborhood $U$ of $v$ in $S^3$ and suppose that $S_0(\bar{F}\cap U)\subset Z(P)$ for some real polynomial $P\in \mathbb{R}[x_1,y_1,x_2,y_2]$. Then $P$ can be written as
	
	\begin{equation}\label{sum}
		P(z_1,\bar{z}_1,z_2,\bar{z}_2)=\sum_{0\leq \alpha,\beta,\gamma,\delta\leq N}C_{\alpha\beta}^{\gamma\delta}\cdot z_1^{\alpha}z_2^{\beta}\bar{z}^{\gamma}_1\bar{z}^{\delta}_2,
	\end{equation}
 where $\{C_{\alpha\beta}^{\gamma\delta}\}$ is a finite set of complex numbers. Now we are to show that $P\equiv 0$ on $\mathbb{C}^2$. As $v$ is a limit point of each $F_k$, there exists a positive integer $M$ such that
 \[
  (z\,\textup{cos}\,r_{k},\,ze^{is_\ell}\textup{sin}\,r_{k})\in S_0(\bar{F}\cap U)\subset Z(P)
  \] 
  if $k,\ell\geq M,$ and $z\in B^1$. Then
	\begin{align}	\label{Equation}
	0&=P(z\,\textup{cos}\,r_{k},\,ze^{is_\ell}\textup{sin}\,r_{k}) \nonumber \\
&=\sum_{0\leq \alpha,\beta,\gamma,\delta\leq N} \big\{C_{\alpha\beta}^{\gamma\delta}\cdot (\textup{cos}\,r_{k})^{\alpha+\gamma}(\textup{sin}\,r_{k})^{\beta+\delta}e^{is_\ell(\beta-\delta)}\big\}z^{\alpha+\beta}\bar{z}^{\gamma+\delta}.
	\end{align}
	Fix nonnegative integers $m,p,q,r$. Since each coefficient of the monomial of multi-degree $(p,q)$ in $(\ref{Equation})$ vanishes, we have
	\begin{equation} \label{Equation 2}
		\sum_{0\leq \alpha,\beta,\gamma,\delta\leq N}\big\{C_{\alpha\beta}^{\gamma\delta}\cdot (\textup{cos}\,r_{k})^{\alpha+\gamma}(\textup{sin}\,r_{k})^{\beta+\delta}e^{is_\ell(\beta-\delta)}\big\}=0
	\end{equation}
  whenever $k,\ell \geq M,~\alpha+\beta=p,$ and $\gamma+\delta=q$. Note that $(\ref{Equation 2})$ is equivalent to the following equation
	\begin{equation*}
		g(z)=\sum_{n=-N}^{N}P_n(\textup{cos}\,r_{k},\, \textup{sin}\,r_{k})\,z^n=0~\forall z\in \{e^{is_\ell}\},
	\end{equation*}
	where 
	\[
	P_n(x,y)=\sum_{\substack{\alpha+\beta=p \\ \gamma+\delta=q\\ \beta-\delta =n}}  C_{\alpha\beta}^{\gamma\delta}\cdot x^{\alpha+\gamma}y^{\beta+\delta}
	\]
    is a homogeneous polynomial of degree $p+q$ in real variables $x,y$. As $g$ is holomorphic on $\mathbb{C}-\{0\}$, it follows from the identity theorem that $P_n(1,\textup{tan}\,r_{k})$ $= 0$ for each integer $k$ and $n$. Then finally, the coefficient of each monomial in $P_n(1,t)$ is zero since $P_n(1,t)\in \mathbb{C}[t]$ is a finite polynomial. Therefore
	\[
	\sum C_{\alpha\beta}^{\gamma\delta} = 0,
	\]
	where the sum is taken over all quadruple $(\alpha,\beta,\gamma,\delta)$ satisfying
	\begin{equation}\label{linear equation}
		\begin{cases}
			\beta-\delta = m \\
			\alpha+\beta=p \\
			\gamma+\delta = q \\
		    \beta+\delta = r.		
		\end{cases}
	\end{equation}
	Note that (\ref{linear equation}) always has a unique solution. So we have $C_{\alpha\beta}^{\gamma\delta}=0$ for any quadruple $(\alpha,\beta,\gamma,\delta)$ appearing in (\ref{sum}). Then $P\equiv 0$ on $\mathbb{C}^2$ as desired.
	
	This construction can be generalized to higher dimensions. Let $x_1(\theta)=\textup{cos}\,\theta,$ $x_2(\theta)=\textup{sin}\,\theta$ be the parametrization of $S^1$ and define a parametrization of $S^{n+1}$ inductively as
		\begin{equation*}
		\begin{cases}
			x_i(\theta_1,\dots,\theta_{n},\theta_{n+1}) =x_i(\theta_1,\dots,\theta_{n})\cdot \textup{cos}\,\theta_{n+1} ~\text{for}~1\leq i \leq n+1,\\
			x_{n+2}(\theta_1,\dots,\theta_{n},\theta_{n+1}) = \textup{sin}\,\theta_{n+1},
		\end{cases}
	\end{equation*}
     where $\{x_i(\theta_1,\dots,\theta_n):1\leq i\leq n+1\}$ is the parametrization of $S^{n}$ chosen in the previous induction step. Fix two $(n-1)$-tuples $k:=(k_1,\dots,$ $k_{n-1}),\,\ell:=(\ell_1,\dots,\ell_{n-1})$  of positive integers and define
		\begin{gather*}
			x_i(k):=x_i(r_{k_1},\dots,r_{k_{n-1}})~\forall i\in \{1,\dots,n\},\\
			F^n_{k\ell}:=	\{(x_1(k),e^{is_{\ell_1}}x_2(k),\dots,e^{is_{\ell_{n-1}}}x_{n}(k)) \in   \mathbb{R}^{2n-1}\cap S^{2n-1}\}.
		\end{gather*}
We also define a set
\[
 F^n:=\bigcup\limits_{k,\ell}{F}^n_{k\ell}.
\]
Then $S_0(F^n)$ is not normal as $(F^n)'$ is countable. One can proceed as before to show that $v_n=(1,0,\dots,0)\in S^{2n-1}$ generates a nonsparse leaf of $S_0(F^n)$ for each positive integer $n$. Therefore, $S_0(F^n)$ is a countable formal Forelli suspension.
\end{example}

\begin{example}\label{nowhere dense Forelli suspension}
\normalfont
 Let $\{s_{\ell}\}$ be the same sequence as in Example \ref{nowhere dense formal Forelli suspension}. For each positive integer ${\ell}$, define
\begin{align*}
G_{\ell}:=\{(x,e^{is_{\ell}}y)\in \mathbb{R}^3&\cap S^{3}:\,x,y\in \mathbb{R}\}.
\end{align*}
Note that $(G_{\ell})'_1\subset \mathbb{C}$ and $(G_{\ell})'_2\subset \mathbb{C}$ are biholomorphic to the real line $\mathbb{R}=\{z_1\in \mathbb{C}:\,\textup{Im}\,z_1=0\}$. By applying the Phragm$\acute{\textup{e}}$n-Lindel$\ddot{\textup{o}}$f principle for subharmonic functions (see p.33 of \cite{Ransford95}), one can check that $V^{\ast}_{\mathbb{R}}(z)=0$ for any $z\in \mathbb{C}$. Therefore, every point of $G_{\ell}$ generates a regular leaf and in particular, each $S_0(G_{\ell})$ is normal. Note that each $S_0(G_{\ell})$ is sparse as
\[
S_0(G_{\ell})\subset \{(z,w)\in \mathbb{C}^2:\textup{Im}\,(e^{is_{\ell}}z\bar{w})=0\}.
\]
But the suspension generated by $G:=\bigcup\limits_{\ell=1}^{\infty}G_\ell$ provides an example of nowhere dense Forelli suspension as it contains a normal suspension $S_0(G_1)$ and a formal Forelli suspension $S_0(F)$ constructed in Example \ref{nowhere dense formal Forelli suspension}. Note that $v=(1,0)\in S^3$ generates a regular leaf and a nonsparse leaf of $S_0(G)$.

This construction can also be generalized to higher dimensions. For each positive integer $\ell$, define
\[
G^n_{\ell}:=\{(x,z_2,\dots,z_{n-1},e^{is_{\ell}}y)\in \mathbb{R}^{2n-1}\cap S^{2n-1}:\,x,y\in \mathbb{R},\,z_i\in \mathbb{C}\,\forall i\}.
\]
Then each $(G^n_{\ell})'_{1}=\mathbb{C}^{n-2}\times \mathbb{R}\subset \mathbb{C}^{n-1}$ is $L$-regular at every point of itself. So $S_0(G^n_{\ell})$ is normal. Note that  each $S_0(G^n_{\ell})$ is sparse as
\[
S_0(G_{\ell})\subset \{(z_1,\dots,z_n)\in \mathbb{C}^n:\textup{Im}\,(e^{is_{\ell}}z_{1}\,\bar{z}_n)=0\}.
\]
But the suspension generated by $G^n:=\bigcup\limits_{\ell=1}^{\infty}G^n_\ell$ contains the formal Forelli suspension $S_0(F^n)$ constructed in Example $\ref{nowhere dense formal Forelli suspension}$. Therefore, $S_0(G^n)$ is a nowhere dense Forelli suspension.
\end{example}

\vspace{50pt}

Ye-Won Luke Cho (\texttt{ww123hh@pusan.ac.kr}) 

\medskip

BRL for Geometry of Submanifolds, 

Department of Mathematics,

Pusan National University, 

Busan 46241, The Republic of Korea.

\end{document}